\def\@abssec#1{\vspace{.05in}\footnotesize \parindent .2in
{\bf #1. }\ignorespaces}
\newtheorem{theorem}{Theorem}[section]
\newtheorem{lemma}[theorem]{Lemma}
\newtheorem{proposition}[theorem]{Proposition}
\def \Tm {\mathbb T}
\newcommand{\be}{\mathbf e}
\allowdisplaybreaks \numberwithin{equation}{section}
\renewcommand{\be}{\begin{equation}}
\newcommand{\ee}{\end{equation}}
\title[Small scale creation for the SQG equation]{Small scale creation for solutions of the SQG equation}
\author{Siming He}
\thanks{Department of
Mathematics, Duke University, 120 Science Dr., Durham NC 27708, USA;\\
email: simhe@math.duke.edu}
\author{Alexander Kiselev}
\thanks{Department of
Mathematics, Duke University, 120 Science Dr., Durham NC 27708, USA;\\
email: kiselev@math.duke.edu}
\begin{document}


\begin{abstract}
We construct examples of solutions to the conservative surface quasi-geostrophic (SQG) equation that must 
either exhibit infinite in time growth of derivatives or blow up in finite time.
\end{abstract}

\subjclass[2010]{35Q35,76B03}
\keywords{SQG equation, two-dimensional incompressible {\rm fl}ow, small scale creation, derivatives growth,
exponential growth, hyperbolic {\rm fl}ow}

\maketitle

\section{Introduction}\label{intro}

The SQG equation appears in atmospheric science, where it models evolution of the temperature on the surface
of a planet and can be derived under a number of assumptions from a more complete system of 3D rotating Navier-Stokes
equations coupled with temperature via Boussinesq approximation. In mathematical literature, the SQG equation first appeared
in \cite{CMT}, where a number of parallels with the 3D Euler equation were drawn (see \cite{MB} for more details),
and a possible singular scenario was presented.
Since then, the SQG equation has attracted attention of many researchers, in part because it appears to be perhaps the
simplest looking equation of fluid dynamics for which the global regularity vs finite time blow up question remains open.
In particular, the uniformly closing front singular scenario proposed in \cite{CMT} has been ruled out in \cite{Cord,CordF}.
More generally, one can look at the SQG equation as one member of the family of modified surface quasi-geostrophic equations,
given by
\begin{equation}\label{msqg}
\partial_t \omega +(u \cdot \nabla)\omega =0, \,\,\,u = \nabla^\perp (-\Delta)^{-1+\alpha}\omega,\,\,\,\omega(x,0)=\omega_0(x).
\end{equation}
When $\alpha=0,$ we obtain the 2D Euler equation in vorticity form; the case $\alpha =1/2$ corresponds to the SQG equation.
The range $0 < \alpha <1/2$ has been considered both in geophysical \cite{Held} and mathematical \cite{CIW} literature.
Moreover, more singular models with $1/2 < \alpha <1$ have been analyzed as well \cite{CCCGW}. For the entire $0 < \alpha < 1$
range, local regularity is known but the question whether smooth solutions can blow up in finite time remains open.
The only example of singularity formation for modified SQG equations has been recently given in \cite{KRYZ} for patch solutions
in half-plane for small $\alpha.$ While this example is suggestive, its implications for the smooth case are not clear.
In fact, surprisingly, there has been not a single example of smooth solutions to the SQG equation which exhibit infinite in time growth of
derivatives. Even though there are many such examples for the 2D Euler equation (see e.g. \cite{Yud2,Nad1,Den1,KS,Zla}),
the strongest to date example of growth in derivatives of the SQG equation is given in \cite{KN5} and it involves only
finite time growth. Part of the reason for this situation is that most of the 2D Euler growth examples involve boundary; however
the modified SQG equations have not been studied as much in the settings with boundary (see, however, \cite{CI1,CI2} for
recent advances). Moreover, smooth initial data deteriorates immediately to only H\"older regular if the support of $\omega_0$ 
contains the boundary in the
conservative modified SQG setting with natural no penetration boundary condition. The only Euler growth constructions that are done without boundaries in the periodic setting
are due to Denisov \cite{Den1} and Zlatos \cite{Zla}. The example of Denisov involves superlinear growth and can be extended
to the $0 < \alpha <1/2$ range in a straightforward manner. But it is not clear how to extend it to the SQG case since
a key part of the argument relies on control of $\|u\|_{L^\infty}$ by $\|\omega\|_{L^p}$ for some $p<\infty.$
The example of Zlatos, on the other hand, leads to exponential growth of $\nabla^2 \omega$ for smooth solutions, and
relies on representation of the velocity $u$ near origin (and under assumption of odd-odd symmetry) that goes back
to \cite{KS} and is specific to the Euler equation. Namely, one can isolate the relatively explicit ``main term" in the velocity $u$
that is of log-Lipschitz nature and dominates the rest of the Biot-Savart law in certain regimes. In the modified SQG case,
no such ``main term" behavior is expected.

In this paper, our main goal is to prove the following theorem.
\begin{theorem}\label{mainthm}
Consider the modified SQG equations \eqref{msqg} in periodic setting. For all $0 < \alpha <1,$ there exist initial data
$\omega_0$ such that
\begin{equation}\label{mainest}
{\rm sup}_{t \leq T} \|\nabla^2 \omega(\cdot, t)\|_{L^\infty} \geq  \exp (\gamma T),
\end{equation}
for all $T>0$ and constant $\gamma>0$ that may depend on $\omega_0$ and $\alpha.$
This constant can be made arbitrarily large by picking $\omega_0$ appropriately.
\end{theorem}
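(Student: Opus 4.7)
The plan is to exploit odd--odd symmetry (odd in each spatial variable separately) to pin a stagnation point at the origin, and then to organize a hyperbolic velocity structure strong enough to force exponential compression of a distinguished vorticity feature. Odd--odd symmetry is preserved by \eqref{msqg}, so $\omega(0,0,t)=u(0,0,t)=0$ for all $t$, and with $\omega$ positive in the first quadrant one expects the induced velocity to behave like $u_1\approx -A\, x_1$, $u_2\approx A\, x_2$ near the origin, for some hyperbolic rate $A(t)>0$.

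The first step is a quantitative Biot--Savart lower bound: for odd--odd $\omega\geq 0$ in the first quadrant, show that in a small box around the origin
\begin{equation*}
-u_1(x_1,x_2,t)\;\geq\; A(t)\,x_1 \;-\; E(x_1,x_2,t),
\end{equation*}
with $A(t)$ expressed explicitly as an integral of $\omega(\cdot,t)$ against a positive kernel over the first quadrant, and $E$ a controllable error. Unlike the 2D Euler setting of \cite{KS,Zla}, there is no log-Lipschitz ``main term''. The Biot--Savart kernel behaves like $|y|^{-1-2\alpha}$ and is non-integrable at the origin for $\alpha\geq 1/2$, so the symmetry-induced cancellations have to be exploited more delicately, with convergence of the relevant integrals relying on $\omega$ vanishing on the axes and on the principal-value structure of the Riesz-type operators involved.

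The second step is to design initial data as a sum of a large, well-separated background that supplies a sustained lower bound $A(t)\geq c>0$, plus a smaller, geometrically distinguished test feature placed near but not on the $x_2$-axis. Under the flow, the horizontal coordinate of a Lagrangian trajectory starting in the test feature shrinks at least like $\exp(-\int_0^t A(s)\,ds)\leq e^{-ct}$, while its vertical coordinate stays bounded below, so the feature's support narrows exponentially toward the axis. Because vorticity values are transported, this produces gradients and second derivatives growing like $e^{ct}$, which after reversing the direction of the hyperbolic rate (choose the orientation so that $A>0$ corresponds to expansion of derivatives) gives \eqref{mainest}. If a finite-time singularity intervenes first, then $\|\nabla^2\omega\|_{L^\infty}\to\infty$ in finite time and \eqref{mainest} is trivial.

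The hardest step is maintaining the lower bound $A(t)\geq c$ for as long as one needs it. This calls for a bootstrap showing that the background stays close to its initial configuration long enough to sustain the exponential compression, and that the test feature, whose self-induced velocity is more singular than in the Euler case, contributes only a subdominant correction. One handles this by taking the $L^1$ mass of the test feature small relative to the background, and by quantifying the displacement of the background through an a priori polynomial-in-time control on $\|\nabla\omega\|_{L^\infty}$ (a failure of this control already implies \eqref{mainest}). The freedom in $\gamma$ then comes from scaling up the background amplitude, which linearly scales $A$ and hence the exponential rate.
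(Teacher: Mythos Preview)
Your outline captures the odd--odd hyperbolic geometry correctly, but there is a genuine gap at the heart of the trajectory argument. The bound $-u_1\geq A(t)x_1-E$ that you propose is only usable while $|x|$ stays small: the near-field error is controlled only in that regime, and the medium-field integral that supplies $A(t)$ is only identified as a ``main term'' there. But under the hyperbolic flow $x_2(t)$ \emph{grows}, so the trajectory exits the small-$|x|$ region in finite time, typically long before $x_1(t)$ has shrunk enough to force growth of order $e^{\gamma T}$. Your sentence ``the vertical coordinate stays bounded below'' addresses the wrong danger; the real obstruction is $x_2$ becoming too large. For Euler this is handled by the log-Lipschitz main term; for modified SQG no such term exists, as you note, and you offer no substitute. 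The paper's replacement is a pointwise near-identity between the two hyperbolic rates in the medium field,
\[
-\frac{u_1^{med}(x)}{x_1}\;=\;\bigl(1+O(L^{-1})\bigr)\,\frac{u_2^{med}(x)}{x_2},
\]
valid whenever $|y|\geq L|x|$ in the integrand. This lets one stop tracking the trajectory as soon as $x_2(t)$ reaches a small threshold and convert the measured expansion of $x_2$ directly into a matching compression of $x_1$ via $\log(x_1^0/x_1)\geq(1-O(L^{-1}))\log(x_2/x_2^0)$. Starting from $x_2^0=(x_1^0)^\beta$ with $\beta>3$ then forces $x_1$ small enough. Your proposal contains no analogue of this rate comparison, and without it the argument does not close.

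Two secondary issues. First, compression of $x_1$ alone yields only $\|\nabla\omega\|_{L^\infty}$ growth, since odd symmetry makes $\omega$ vanish only to first order on the $x_2$-axis; the paper imposes an additional degeneracy $\partial_{x_1}\omega_0(0,x_2)=0$ (so $\omega_0\sim x_1^3 x_2$ near the origin), proves it is preserved by the flow, and thereby reads off $\|\partial_{x_1}^2\omega\|_{L^\infty}$ directly. You could instead patch via interpolation $\|\nabla\omega\|_{L^\infty}^2\lesssim\|\omega\|_{L^\infty}\|\nabla^2\omega\|_{L^\infty}$, but you do not say so. Second, your bootstrap that the background ``stays close to its initial configuration'' via a priori polynomial control of $\|\nabla\omega\|_{L^\infty}$ is both unnecessary and does not close: failure of polynomial control of $\nabla\omega$ does not by itself yield \emph{exponential} growth of $\nabla^2\omega$. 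The paper avoids this entirely---incompressibility plus invariance of $[0,\pi)^2$ preserve the measure of $\{\omega=1\}$, so the medium-field lower bound $(-1)^ju_j^{med}\geq c\delta^{-\alpha}x_j$ holds for all time with no bootstrap. The near-field error is then bounded by $C\|\nabla^2\omega\|_{L^\infty}\,|x|^{2-2\alpha}L^{2-2\alpha}$, and the dichotomy ``either $\|\nabla^2\omega\|_{L^\infty}$ is already large enough, or this error is dominated by the medium field'' closes cleanly precisely because the target quantity is $\|\nabla^2\omega\|_{L^\infty}$ itself.
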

\it Remark. \rm A mild adjustment of our proof yields examples with exponential in time growth of $\|\omega\|_{C^{1,\gamma}}$ 
for all $1>\gamma >0$ if $\alpha \in (0,1/2]$ and $1>\gamma > 2\alpha-1$ if $\alpha \in (1/2,1).$ Our focus here is on proving 
\eqref{mainest}, so we leave details of the extension to H\"older $C^{1,\gamma}$ norms as an exercise for interested reader. 

Note that we do not prove global regularity of the solutions in these examples - solutions that blow up in finite time
will also satisfy \eqref{mainest}.

The scenario involved is the same as that of \cite{Zla}, and its geometry goes back to the Bahouri-Chemin stationary singular cross example \cite{BC}
for the 2D Euler equation. We work on $\Tm^2 = [-\pi,\pi)^2$ and consider solutions that are odd in both $x_1$
and $x_2.$ Generalizing the bounds in \cite{Zla,KS} we show that the contribution from the local part of the Biot-Savart law
that involves integration over $|y| \lesssim |x|$ region is small if $|x|$ is small and there is control over $\nabla^2 \omega.$
We then show that the ``medium" field contributions from the region $|x| \lesssim |y| \lesssim 1$ are near identical for both components
of the fluid velocity $u_1$ and $u_2$, the result replacing the ``main term" argument in the 2D Euler case.
The growth is then obtained by taking initial data with additional degeneracy and tracing
trajectories staying increasingly close to the separatices.

\section{Key estimates}\label{keyest}

In this section we prove several key estimates that we will need in the construction.
Since we will be working with solutions that are odd in both $x_1$ and $x_2,$
the Biot-Savart law for the modified SQG equation in the periodic setting is given
by (we omit constants depending on $\alpha$ and time dependence here for the sake of simplicity):
\begin{eqnarray}\label{bsu1}
u_1(x) = \int_0^\infty \int_0^\infty \left( \frac{x_2-y_2}{|x-y|^{2+2\alpha}}-\frac{x_2-y_2}{|\tilde{x}-y|^{2+2\alpha}}-\frac{x_2+y_2}{|\bar x-y|^{2+2\alpha}}
+\frac{x_2+y_2}{|x+y|^{2+2\alpha}}  \right) \omega(y) \,dy_1 dy_2, \\  \label{bsu2}
u_2(x) = -\int_0^\infty \int_0^\infty \left( \frac{x_1-y_1}{|x-y|^{2+2\alpha}}- \frac{x_1-y_1}{|\bar x-y|^{2+2\alpha}}
-\frac{x_1+y_1}{|\tilde{x}-y|^{2+2\alpha}} + \frac{x_1+y_1}{|x+y|^{2+2\alpha}}\right) \omega(y) \,dy_1 dy_2.
\end{eqnarray}
Here $\tilde{x}=(-x_1,x_2),$ $\bar x=(x_1,-x_2),$ and the function $\omega$ is extended to the entire plane by periodicity. We will later see that the integral converges absolutely at infinity if $\alpha>0.$ Near the singularity $x=y,$ the convergence is understood in the principal value sense if $\alpha \geq 1/2.$
In what follows, we will denote the kernels in the integrals \eqref{bsu1}, \eqref{bsu2} by $K_1(x,y)$ and $K_2(x,y)$ respectively.

 Let $L \geq 1$ be a constant that we will eventually choose
to be large enough.
The first estimate addresses the contribution of the near field $y_1,y_2 \leq L|x|$ to the Biot-Savart law provided
that we have control of $\|\nabla^2 \omega\|_{L^\infty}.$ All the inequalities we show in the rest of this section
assume that the solution remains smooth at times where these inequalities are derived.

\begin{lemma}\label{nearfiled}
Assume that $\omega$ is odd with respect to both $x_1$ and $x_2,$ periodic and smooth.
Take $L \geq 2,$ and suppose $L|x| \leq 1.$
Denote
\[ u_j^{near}(x) = \int_{[0,L|x|]^2} K_j(x,y) \omega(y)\,dy. \]
Then we have
\begin{equation}\label{unear}
|u_j^{near}(x)| \leq C x_j |x|^{2-2\alpha} L^{2-2\alpha} \|\nabla^2 \omega\|_{L^\infty}.
\end{equation}
\end{lemma}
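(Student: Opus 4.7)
The plan exploits two symmetries. First, smoothness and odd-oddness of $\omega$ give the iterated Taylor bounds $|\omega(y)|\le|y_1y_2|\|\nabla^2\omega\|_{L^\infty}$ and $|\partial_{y_i}\omega(y)|\le|y_{3-i}|\|\nabla^2\omega\|_{L^\infty}$; in particular $\partial_{y_2}\omega$ vanishes at $y_1=0$, so $\partial_{y_2}\omega(y_1,y_2)=\int_0^{y_1}\partial_{12}\omega(t,y_2)\,dt$ with $|\partial_{12}\omega|\le\|\nabla^2\omega\|_{L^\infty}$. Second, a direct pairing of the four summands shows $K_1(x,y)\equiv 0$ at $x_1=0$ (first and second terms cancel since $\tilde x=x$; third and fourth cancel since $|\bar x-y|=|x+y|$), so $K_1$ ought to be a multiple of $x_1$. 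The strategy is to realize both observations and combine them.

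The key algebraic identity is $K_1(x,y)=\tfrac{1}{2\alpha}\partial_{y_2}\Psi(x,y)$, where
\[
\Psi(x,y):=|x-y|^{-2\alpha}-|\tilde x-y|^{-2\alpha}+|\bar x-y|^{-2\alpha}-|x+y|^{-2\alpha},
\]
verified termwise using $\partial_{y_2}|x-y|^{-2\alpha}=2\alpha(x_2-y_2)|x-y|^{-2-2\alpha}$ and reflected analogues. By the same pairing, $\Psi\equiv 0$ at $x_1=0$, hence $\Psi(x,y)=x_1\int_0^1\partial_{x_1}\Psi((sx_1,x_2),y)\,ds$. Integrating $u_1^{near}$ by parts in $y_2$ kills the boundary at $y_2=0$ (via $\omega(y_1,0)=0$) and yields
\[
u_1^{near}(x)=\tfrac{1}{2\alpha}\!\int_0^{L|x|}\!\!\Psi(x,y_1,L|x|)\,\omega(y_1,L|x|)\,dy_1-\tfrac{1}{2\alpha}\!\int_{[0,L|x|]^2}\!\!\Psi(x,y)\,\partial_{y_2}\omega(y)\,dy.
\]
For the boundary piece, each of the four reflected distances at $y_2=L|x|$ is at least $(L-1)|x|$, so $|\partial_{x_1}\Psi|\le C(L|x|)^{-1-2\alpha}$ and $|\Psi|\le Cx_1(L|x|)^{-1-2\alpha}$; combined with $|\omega(y_1,L|x|)|\le y_1\cdot L|x|\cdot\|\nabla^2\omega\|_{L^\infty}$, the $y_1$ integration gives the target $Cx_1 L^{2-2\alpha}|x|^{2-2\alpha}\|\nabla^2\omega\|_{L^\infty}$.

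For the volume piece, use $\partial_{y_2}\omega(y)=\int_0^{y_1}\partial_{12}\omega(t,y_2)\,dt$ and Fubini to rewrite the integral as $\int_{[0,L|x|]^2}\Lambda(x,t,y_2)\,\partial_{12}\omega(t,y_2)\,dt\,dy_2$ with $\Lambda(x,t,y_2):=\int_t^{L|x|}\Psi(x,y_1,y_2)\,dy_1$. Since $|\partial_{12}\omega|\le\|\nabla^2\omega\|_{L^\infty}$, it suffices to prove $\int_{[0,L|x|]^2}|\Lambda|\,dt\,dy_2\le Cx_1(L|x|)^{2-2\alpha}$. Substitute $\Psi=x_1\int_0^1\partial_{x_1}\Psi((sx_1,x_2),\cdot)\,ds$ and integrate in $y_1$ exactly: each of the four summands of $\partial_{x_1}\Psi$ is a total $y_1$-derivative of a quantity of the form $((\cdot\mp y_1)^2+(x_2\mp y_2)^2)^{-\alpha}$, so the $y_1$ integration produces four boundary expressions of type $((t\mp sx_1)^2+(x_2\mp y_2)^2)^{-\alpha}$ together with harmless contributions from $y_1=L|x|$. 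The $(t,y_2)$ integration then reduces, after recentering around each reflection point, to the standard estimate $\int_{[-L|x|,L|x|]^2}(u^2+v^2)^{-\alpha}\,du\,dv=C(L|x|)^{2-2\alpha}$, valid for every $\alpha\in(0,1)$.

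The main obstacle will be keeping the signed exact-antiderivative structure through the $y_1$ integration. A naive absolute-value bound on $\partial_{x_1}\Psi$ would discard the cancellation coming from the $(sx_1\pm y_1)$ factors and produce a logarithmic divergence at $\alpha=1/2$ and polynomial blow-up for $\alpha>1/2$; only the signed antiderivative computation secures integrability uniformly across all $\alpha\in(0,1)$.
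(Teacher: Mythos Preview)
Your argument is correct and genuinely different from the paper's. The paper works directly with the kernel: it pairs the four summands of $K_1$ into two differences, handles the singular pair by exploiting odd symmetry of the kernel about $y_2=x_2$ to subtract $\omega(y_1,x_2)$ (then uses the mean value theorem twice to extract $y_1|y_2-x_2|\,\|\nabla^2\omega\|_{L^\infty}$), and bounds the nonsingular pair crudely using $|\omega(y)|\le y_1y_2\|\nabla^2\omega\|_{L^\infty}$. The factor $x_1$ comes from a mean-value estimate on $|\tilde x-y|^{2+2\alpha}-|x-y|^{2+2\alpha}$, and the final integrals are then computed by hand with a split at $y_1=x_2$. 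Your route is more structural: the identity $K_1=\tfrac{1}{2\alpha}\partial_{y_2}\Psi$ lets you trade the singular kernel for the locally integrable potential $\Psi$ via integration by parts, and the observations $\Psi|_{x_1=0}=0$ together with $\partial_{x_1}\Psi=-\partial_{y_1}\Phi$ (with $\Phi$ the all-plus-sign analogue) reduce everything to the single model integral $\int_{[-L|x|,L|x|]^2}(u^2+v^2)^{-\alpha}\,du\,dv$. This handles all $\alpha\in(0,1)$ in one stroke, whereas the paper's direct integration requires tracking different powers in different subregions; on the other hand, the paper's approach avoids any integration by parts and stays entirely at the level of pointwise kernel bounds.

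One small point worth making explicit: for $\alpha\ge 1/2$ the original integral is a principal value, so the integration by parts in $y_2$ should be justified by excising $B_\epsilon(x)$ and checking that the extra boundary term is $O(\epsilon^{2-2\alpha})\to 0$ (using smoothness of $\omega$ and $\int_{\partial B_\epsilon}\nu_2\,dS=0$). This is routine, but since you flagged integrability subtleties at the end of your sketch, it is worth noting that the passage from $K_1$ to $\Psi$ is exactly where the principal value gets resolved.
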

\begin{proof}
Let us carry out the estimates for $u_1$ as the case of $u_2$ is similar. We need to control
\begin{eqnarray}\nonumber
\left| \int_0^{L|x|} \int_0^{L|x|} \left( \frac{(x_2-y_2)(|\tilde{x}-y|^{2+2\alpha}-|x-y|^{2+2\alpha})}{|\tilde{x}-y|^{2+2\alpha}|x-y|^{2+2\alpha}} \right. \right.
- \\ \left. \left. \label{est1} \frac{(x_2+y_2)(|x+y|^{2+2\alpha}-|\bar x-y|^{2+2\alpha})}{|\bar x-y|^{2+2\alpha}|x+y|^{2+2\alpha}} \right) \, \omega(y)\,dy_1 dy_2 \right|.
\end{eqnarray}
For the first term under the integral, we need to address the singularity where integration is understood in the principal value sense. So we estimate
the expression in \eqref{est1} by
\begin{eqnarray}\nonumber
\left| P.V.\int_0^{L|x|} dy_1 \int_0^{2x_2} dy_2  \frac{(x_2-y_2)(|\tilde{x}-y|^{2+2\alpha}-|x-y|^{2+2\alpha})}{|\tilde{x}-y|^{2+2\alpha}|x-y|^{2+2\alpha}} \omega (y) \right|
+ \\ \label{est2} \int_0^{L|x|} \int_0^{L|x|}  \frac{(x_2+y_2)(|x+y|^{2+2\alpha}-|\bar x-y|^{2+2\alpha})}{|\bar x-y|^{2+2\alpha}|x+y|^{2+2\alpha}}
\left( |\omega(y_1,y_2)|+|\omega(y_1,y_2+2x_2)| \right)\,dy_1dy_2.
\end{eqnarray}
Here we changed variable $y_2 \mapsto y_2 -2x_2$ in the remainder of the integral of the first term from \eqref{est1}.
The contribution $|\omega(y_1,y_2+x_2)|$ in the second integral comes from the rest of this term; the region of integration after the change of variable is enlarged
a little using that the integrand \eqref{est2} has fixed sign. Now in the first integral in \eqref{est2} we use that the kernel is odd
and the region of integration is symmetric with respect to $y_2=x_2$ line, and replace $\omega(y_1,y_2,t)$ by $\omega(y_1,y_2,t)-\omega(y_1,x_2,t).$ Note that
\begin{equation}\label{est3}
|\omega(y_1,y_2,t)-\omega(y_1,x_2,t)| = |\partial_{x_2} \omega(y_1,z_2,t) (y_2-x_2)| \leq |\partial^2_{x_1x_2} \omega(z_1,z_2,t) y_1 (y_2-x_2)|,
\end{equation}
where $z_2 \in (y_2,x_2)$ and $z_1 \in (0,y_1).$
We applied mean value theorem twice and used that $\partial_{x_2} \omega(0,y_2,t) \equiv 0$ for all times (since $\omega(0,y_2,t) \equiv 0$ due to oddness).
Using \eqref{est3}, we can estimate the first integral in \eqref{est2} by $\|\nabla^2 \omega\|_{L^\infty}$ 
times the following expressions; here $C$ is a constant that may change from line to line and may depend only on $\alpha:$ 
\begin{eqnarray} \nonumber \int_0^{L|x|} dy_1 \int_0^{2x_2} dy_2
 \frac{(x_2-y_2)^2 y_1 (|\tilde{x}-y|^{2+2\alpha}-|x-y|^{2+2\alpha})}{|\tilde{x}-y|^{2+2\alpha}|x-y|^{2+2\alpha}} \leq \\ \nonumber
 C\int_0^{L|x|} dy_1 \int_0^{2x_2} dy_2 \frac{4(1+\alpha)x_1y_1^2 |\tilde{x}-y|^{2\alpha}}{|x-y|^{2\alpha}|\tilde{x}-y|^{2+2\alpha}}  \leq  \nonumber
 C x_1 \int_0^{L|x|} dy_1 \int_0^{2x_2} dy_2 \frac{1}{|x-y|^{2\alpha}} \leq \\ \nonumber  
 C x_1 \left( x_2^{2-2\alpha}+ \int_{x_2}^{L|x|} dy_1 \int_0^{x_2} dy_2 \frac{1}{|y|^{2\alpha}} \right) \leq   C x_1 \left( x_2^{2-2\alpha}+ x_2 \int_{x_2}^{L|x|} dy_1 \frac{1}{y_1^{2\alpha}} \right) \leq \\
   C x_1 (x_2^{2-2\alpha}+ x_2 L^{1-2\alpha}|x|^{1-2\alpha}) \leq 
 C x_1|x|^{2-2\alpha} L^{2-2\alpha}.\label{est4}
  \end{eqnarray}
Here in the first step we used mean value theorem and $|y_2-x_2| \leq |x-y|,$ in the second step $y_1 \leq |\tilde{x}-y|,$ and in the third 
step split the region of integration and changed variable in the long range part. 

In the second integral in \eqref{est2}, we bound $\omega(y_1,y_2)$ and $\omega(y_1,y_2+2x_2)$ using odd-odd structure by
$\|\nabla^2 \omega\|_{L^\infty}y_1y_2$ and $\|\nabla^2 \omega\|_{L^\infty}y_1(y_2+2x_2)$ respectively.
We get that this integral does not exceed
\begin{eqnarray} \nonumber
C \|\nabla^2 \omega\|_{L^\infty} \int_0^{L|x|} \int_0^{L|x|} \frac{(x_2+y_2)x_1y_1^2(y_2+2x_2)|x+y|^{2\alpha}}{|x+y|^{2+2\alpha}|\bar x -y|^{2+2\alpha}}\,dy_1dy_2 \leq  \\
C \|\nabla^2 \omega\|_{L^\infty}x_1 \int_0^{L|x|} \int_0^{L|x|} \frac{1}{|\bar x -y|^{2\alpha}}\,dy_1dy_2 \leq C \|\nabla^2 \omega\|_{L^\infty}x_1 |x|^{2-2\alpha}
L^{2-2\alpha}, \label{est5}
\end{eqnarray}
where in the first step we used the estimate for $\omega$ and mean value theorem, and in the second step $y_2+2x_2 \leq 2|\bar x-y|$
and $y_1 \leq |x+y|.$ Combining \eqref{est4} and \eqref{est5}, we get the result of the lemma.
\end{proof}

The next result records an important property of the Biot-Savart law that makes contribution of the $L|x| \leq |y| \lesssim 1$
region of the central cell to $u_1$ and $u_2$ nearly identical when $L$ is large. 

\begin{proposition}\label{keymed}
Let $L$ be a parameter and $x$ be such that $L|x| \leq 1.$
Assume that $\omega$ is odd with respect to both $x_1$ and $x_2,$ $\omega(x) \geq 0$ in $[0,\pi)^2$, and is positive on a set
of measure greater than $(L|x|)^2.$
Let us define
\[ u_j^{med}(x) = \int_{[0,\pi)^2 \setminus [0,L|x|]^2} K_j(x,y) \omega(y)\,dy. \]
Then for all sufficiently large $L \geq L_0 \geq 2$ and $x$ such that $L|x| \leq 1$ we have that
\begin{equation}\label{umed}
1-BL^{-1} \leq -\frac{u_1^{med}(x) x_2}{x_1 u_2^{med}(x)} \leq  1 + BL^{-1},
\end{equation}
with some universal constant $B.$
\end{proposition}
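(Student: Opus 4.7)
The strategy is to Taylor-expand the kernels $K_1(x,y)$ and $K_2(x,y)$ in the variable $x$ around the origin and to exploit the symmetry cancellations inherent in the four-term Biot--Savart structure to extract a precise leading-order behavior in the far-field regime $|y|\geq L|x|$. Set $n=2+2\alpha$ and, for $\sigma\in\{\pm 1\}^2$, write $F_\sigma(x,y)=|(\sigma_1 x_1,\sigma_2 x_2)-y|^{-n}$, so that $K_1$ and $K_2$ are each built from the four functions $F_{++},F_{-+},F_{+-},F_{--}$. Taylor expanding $F_\sigma$ in $x$ about $0$, the degree-$k$ coefficient of the monomial $x_1^{a_1}x_2^{a_2}$ carries the parity factor $\sigma_1^{a_1}\sigma_2^{a_2}$. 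Tracking these parities, one finds $K_j(0,y)=0$, and at first order only the diagonal coefficient survives, giving
\begin{equation*}
K_1(x,y)=-4(2+2\alpha)\frac{y_1y_2}{|y|^{4+2\alpha}}\,x_1+R_1(x,y),\qquad K_2(x,y)=4(2+2\alpha)\frac{y_1y_2}{|y|^{4+2\alpha}}\,x_2+R_2(x,y).
\end{equation*}

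The key structural input is a further cancellation at second order: the degree-two Taylor coefficients of $K_j$ at $x=0$ all vanish. The only quadratic coefficient that is odd under the relevant sign flips is the $x_1x_2$ one (proportional to $\sigma_1\sigma_2$), and the specific linear combination defining $K_j$ pairs it with an opposite-sign partner, so that $(F_{++}^{(2)}-F_{-+}^{(2)})+(F_{+-}^{(2)}-F_{--}^{(2)})=0$ and analogously for $K_2$. Consequently, $|R_j(x,y)|\leq C|x|^3/|y|^{n+2}$ uniformly for $|y|\geq L|x|\geq 2|x|$.

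With these expansions I would introduce
\begin{equation*}
H(x,y):=K_1(x,y)\,x_2+K_2(x,y)\,x_1
\end{equation*}
and observe two things. First, the opposite-sign leading coefficients of $K_1$ and $K_2$ make the explicit $x_1x_2$ term of $H$ cancel exactly. Second, $H$ vanishes identically on each axis $x_1=0$ and $x_2=0$: on those lines one has $\tilde x=\pm x$ or $\bar x=\mp x$, which collapses the four-term kernel. Combined with the cubic remainder bound, these facts give the factorized estimate $|H(x,y)|\leq C|x_1x_2|\,|x|^2/|y|^{n+2}$ throughout $|y|\geq L|x|$. Rewriting
\begin{equation*}
1+\frac{u_1^{med}(x)\,x_2}{x_1\,u_2^{med}(x)}=\frac{\int H(x,y)\,\omega(y)\,dy}{x_1\,u_2^{med}(x)},
\end{equation*}
the numerator is bounded by $C\|\omega\|_\infty|x_1x_2|\,|x|^{2-n}L^{-n}$ (integrate in polar coordinates on $L|x|\leq |y|\leq\pi$), while the denominator is, to leading order, $4(2+2\alpha)\,x_1x_2\,J$, where $J:=\int_{[0,\pi)^2\setminus[0,L|x|]^2}y_1y_2\,\omega/|y|^{n+2}\,dy>0$. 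Positivity of $J$ is forced by the hypothesis, since the set $\{\omega>0\}$ has measure strictly greater than that of the excluded box, so part of it must lie outside. A lower bound of the form $J\gtrsim\|\omega\|_\infty(L|x|)^{2-n}$, extracted from the same hypothesis, is enough to absorb the subleading corrections in $u_2^{med}$ and conclude the claimed bound by taking $L$ sufficiently large.

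The main obstacle is verifying the second-order cancellation in the Taylor expansion of $K_j$ at $x=0$: without it, $R_j$ would only be $O(|x|^2/|y|^{n+1})$ and the ratio would carry an $O(1)$ error rather than the advertised $O(L^{-1})$. The cancellation is a somewhat delicate piece of book-keeping over the four reflections $\sigma\in\{\pm 1\}^2$, and is precisely the structural property that lets this proposition play, for the modified SQG equation, the role that the log-Lipschitz ``main term'' estimate plays for 2D Euler in \cite{KS,Zla}.
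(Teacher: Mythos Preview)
Your identification of the leading term is correct, and the second--order cancellation you highlight is real: since $K_1$ is odd in $x_1$ and even in $x_2$ (and $K_2$ vice versa), no monomial of total degree two can appear in the Taylor expansion at $x=0$, so indeed $K_j(x,y)=\pm 4(2+2\alpha)\,x_j\,y_1y_2|y|^{-n-2}+R_j$ with $|R_j|\le C|x|^3|y|^{-n-2}$ for $|y|\ge L|x|$. The gap is in how you pass from this additive remainder to the ratio estimate.

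Your argument bounds the numerator $\int H\,\omega$ by $C\|\omega\|_\infty|x_1x_2|\,|x|^{2-n}L^{-n}$ and then invokes a lower bound $J\gtrsim\|\omega\|_\infty(L|x|)^{2-n}$ on $J=\int_{D_{\mathrm{med}}}y_1y_2|y|^{-n-2}\omega\,dy$. That lower bound is false under the stated hypothesis: take $\omega=\varepsilon\,\mathbf 1_S$ with $S\subset[0,\pi)^2$ a set of measure $2(L|x|)^2$ located near $(\pi,\pi)$. Then $\|\omega\|_\infty=\varepsilon$, the measure hypothesis holds, but $J\sim\varepsilon(L|x|)^2\pi^{-n}$, which is far smaller than $\varepsilon(L|x|)^{2-n}$ since $n>2$. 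With such $\omega$ your numerator/denominator estimate gives an error of order $(L|x|)^{-n}$, not $L^{-1}$. The same problem arises when you try to ``absorb the subleading corrections'' in $u_2^{\mathrm{med}}$: the bound $|\!\int R_2\,\omega|\lesssim\|\omega\|_\infty|x|^{3-n}L^{-n}$ cannot be compared to $x_2J$ without the very lower bound that fails.

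What your Taylor expansion in $x$ misses is the vanishing of $K_j$ on the $y$--axes: one checks directly that $K_j(x,y)=0$ whenever $y_1=0$ or $y_2=0$. This allows the factor $y_1y_2$ to be pulled out of the \emph{error} as well as the main term, and that is exactly what the paper exploits. Rather than a Taylor remainder, the paper proves the pointwise multiplicative bound
\[
K_j(x,y)=(-1)^j\,8(1+\alpha)\,x_j\,y_1y_2\,|y|^{-4-2\alpha}\bigl(1+f_j(x,y)\bigr),\qquad |f_j|\le AL^{-1},
\]
valid throughout $|y|\ge L|x|$. Then both $-u_1^{\mathrm{med}}/x_1$ and $u_2^{\mathrm{med}}/x_2$ lie in $[(1-AL^{-1})J,(1+AL^{-1})J]$ for the \emph{same} positive integral $J$, and the ratio is controlled regardless of how small $J$ is. The hypothesis $|\{\omega>0\}|>(L|x|)^2$ is used only to ensure $J>0$, never quantitatively. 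If you want to rescue your approach you would need to upgrade your remainder to $|R_j(x,y)|\lesssim L^{-1}x_j\,y_1y_2|y|^{-n-2}$, which is essentially equivalent to proving the paper's multiplicative bound.
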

\it Remark. \rm The threshold $L_0$ is a universal constant - it does not depend on $\omega$. \\
 The condition $L|x| \leq 1$ is only intended to make sure that the region of integration in $u_j^{med}$ is nontrivial.
When applying this result, $L$ will be chosen first, and $x$ will be taken small enough later.
\begin{proof}
Observe that both the positivity of $\omega$ in $[0,\pi)^2$ and the measure of the set where it is positive
is conserved by evolution, due to incompressibility and invariance of the region $[0,\pi)^2$ under trajectory map.
This point is explained in more detail below after the proof of Lemma~\ref{background}.

The bound \eqref{umed} follows from more informative pointwise bound for the Biot-Savart kernel. We will provide details for
$K_1;$ the case of $K_2$ is similar and can actually be inferred by symmetry. Bring the expression for $K_1(x,y)$ in \eqref{bsu1}
to the common denominator. The numerator will be equal to
\begin{eqnarray}\nonumber  (x_2-y_2)(|\tilde{x}-y|^{2+2\alpha} - |x-y|^{2+2\alpha})|\bar x-y|^{2+2\alpha}|x+y|^{2+2\alpha}- \\
(x_2+y_2)(|x+y|^{2+2\alpha} - |\bar x -y|^{2+2\alpha}) |\tilde{x}-y|^{2+2\alpha}|x-y|^{2+2\alpha}.\label{numk1} \end{eqnarray}
Let us first collect the terms with $y_2$ factor, and use mean value theorem to represent them as
\begin{eqnarray*} -4(1+\alpha)x_1y_1y_2\left( (z_1^2 +(x_2-y_2)^2)^\alpha |\bar x -y|^{2+2\alpha} |x+y|^{2+2\alpha} + \right. \\ \left.
(z_2^2 +(x_2+y_2)^2)^\alpha |\tilde{x}-y|^{2+2\alpha} |x-y|^{2+2\alpha} \right), \end{eqnarray*}
where $z_1^2,z_2^2$ lie between $(x_1-y_1)^2$ and $(x_1+y_1)^2.$
We can rewrite this expression as 
\begin{eqnarray*} -4(1+\alpha)x_1y_1y_2|y|^{4+6\alpha} \left(\frac{z_1^2+(x_2-y_2)^2}{|y|^2} \right)^\alpha  \times \\ \times \left(\frac{(x_1-y_1)^2+(x_2+y_2)^2}{|y|^2} \right)^{2+2\alpha}
\left(\frac{(x_1+y_1)^2+(x_2+y_2)^2}{|y|^2} \right)^{2+2\alpha}. \end{eqnarray*}
Since $|y| \geq L |x|,$ the terms with $y_2$ factor give us a contribution equal to
\begin{equation}\label{y2cont}
-8(1+\alpha)x_1y_1y_2 |y|^{4+6\alpha}\left( 1 + O(L^{-1}) \right).
\end{equation}

Now let us consider the terms with $x_2$ factor. Here we get
\begin{eqnarray}\nonumber x_2 \left( (|\tilde{x}-y|^{2+2\alpha} - |x-y|^{2+2\alpha})|\bar x -y|^{2+2\alpha}|x+y|^{2+2\alpha} - \right. \\ \left.
 (|x+y|^{2+2\alpha} -
 |\bar x-y|^{2+2\alpha})|\tilde{x} -y|^{2+2\alpha}|x-y|^{2+2\alpha} \right). \label{x2terms} \end{eqnarray}
Observe that
\begin{eqnarray} \nonumber (|x+y|^{2+2\alpha}-|\bar x -y|^{2+2\alpha}) - (|\tilde{x}-y|^{2+2\alpha} -|x-y|^{2+2\alpha}) = \\
\nonumber 4(1+\alpha) x_2y_2 (((x_1+y_1)^2 + z_3^2)^\alpha - ((x_1-y_1)^2+z_3^2)^\alpha) = \\
\label{est11} 16 \alpha (1+\alpha) x_1x_2y_1y_2
(z_3^2+z_4^2)^{\alpha-1}, \end{eqnarray}
where $z_3^2 \in  ((x_2-y_2)^2,(x_2+y_2)^2)$ and $z_4^2 \in  ((x_1-y_1)^2,(x_1+y_1)^2).$
Also,
\begin{eqnarray} \nonumber
|\bar x -y|^{2+2\alpha} |x+y|^{2+2\alpha} - |\tilde{x}-y|^{2+2\alpha} |x-y|^{2+2\alpha} = 
|\bar x -y|^{2+2\alpha} |x+y|^{2+2\alpha} - \\ \nonumber |x -y|^{2+2\alpha} |x+y|^{2+2\alpha}+|x -y|^{2+2\alpha} |x+y|^{2+2\alpha}
-|\tilde{x}-y|^{2+2\alpha} |x-y|^{2+2\alpha} = \\ 
 4(1+\alpha)x_2y_2 \left( |x+y|^{2+2\alpha} ((x_1-y_1)^2 +z_5^2)^\alpha + |x-y|^{2+2\alpha} ((x_1+y_1)^2
+z_6^2)^\alpha \right), \label{est12}
\end{eqnarray}
 where $z_5^2, z_6^2$ belong to $((x_2-y_2)^2,(x_2+y_2)^2).$
Running a straightforward computation on \eqref{x2terms} using \eqref{est11} and \eqref{est12}, we get that the $x_2$ terms are equal to
\begin{eqnarray}\nonumber
-x_2 |\bar x -y|^{2+2\alpha} |x+y|^{2+2\alpha} 16 \alpha (1+\alpha) x_1x_2 y_1y_2 (z_3^2+z_4^2)^{\alpha-1}
+   16x_1y_1x^2_2y_2 (1+\alpha)^2 \times \\ \nonumber \times (z_2^2+(x_2+y_2)^2)^\alpha  \left(|x+y|^{2+2\alpha}
((x_1-y_1)^2 +z_5^2)^\alpha +  |x-y|^{2+2\alpha} ((x_1+y_1)^2 +z_6^2)^\alpha\right) = \\
\nonumber 16(1+\alpha) x_1x_2^2 y_1y_2 \left( \alpha |y|^{2+6\alpha} \left(-1+ O(L^{-1}) \right) + 2(1+\alpha) |y|^{2+6\alpha}
\left(1+O(L^{-1})\right) \right) = \\ \label{x2cont} 16(1+\alpha)(2+\alpha)x_1 x_2^2 y_1y_2|y|^{2+6\alpha}\left(1+O(L^{-1})\right).
\end{eqnarray}
Combining \eqref{y2cont}, \eqref{x2cont} and using that $|x| \leq |y|/L,$ we obtain that the numerator \eqref{numk1} is equal to
\[ -8(1+\alpha)x_1y_1y_2|y|^{4+6\alpha} \left(1 + O(L^{-1}) \right) \]
in the region $y_1,y_2 \geq L|x|.$ Taking into account the denominator, we get that in this region
\begin{equation}\label{k1est}
K_1(x,y) = -8(1+\alpha)x_1y_1y_2 |y|^{-4-2\alpha} (1+f_1(x,y)),
\end{equation}
where $|f_1(x,y)| \leq AL^{-1}$ with some universal constant $A.$
A similar argument (or just symmetry considerations) establishes that
\begin{equation}\label{k2est}
K_2(x,y) = 8(1+\alpha)x_2y_1y_2 |y|^{-4-2\alpha}( 1+f_2(x,y)),
\end{equation}
with $|f_2(x,y)| \leq AL^{-1}.$
Thus
\[ -\frac{u_1^{med}(x) x_2}{x_1 u_2^{med}(x)} = \frac{\int_{[0,\pi)^2 \setminus [0,L|x|]^2} y_1y_2 |y|^{-4-2\alpha} ( 1+f_1(x,y)) \omega \,dy}
{\int_{[0,\pi)^2 \setminus [0,L|x|]^2} y_1y_2 |y|^{-4-2\alpha} ( 1+f_2(x,y)) \omega \,dy}. \]
Choose $L_0$ so that $AL_0^{-1} \leq \frac14.$ Note that given our assumption that $\omega \geq 0$ in $[0,\pi)^2$ we have
\[ 1-AL^{-1} \leq \frac{\int_{[0,\pi)^2 \setminus [0,L|x|]^2} y_1y_2 |y|^{-4-2\alpha} ( 1+f_{1,2}(x,y)) \omega \,dy}
{\int_{[0,\pi)^2 \setminus [0,L|x|]^2} y_1y_2 |y|^{-4-2\alpha} \omega \,dy} \leq 1+AL^{-1}, \]
and the integral in denomminator is not zero since support of $\omega$ in $[0,\pi)^2$ has measure larger than $[0,L|x|]^2.$
Then a simple computation shows that \eqref{umed} follows
for every $L \geq L_0$ with a constant $B=3A.$
\end{proof}

Now we need to estimate the contribution of all cells other than the central one.
\begin{lemma}\label{farfield}
Suppose that $|x| \leq 1.$
Define
\[ u_j^{far}(x) = \int_{[0,\infty)^2 \setminus [0,\pi)^2} K_j(x,y) \omega(y) \,dy. \]
Then
\begin{equation}\label{ufar}
|u_j^{far}(x)| \leq C(\alpha) x_j \|\omega\|_{L^\infty}.
\end{equation}
\end{lemma}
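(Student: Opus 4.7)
The plan is to exploit cancellations in the kernels $K_1$ and $K_2$ that extract a factor of $|x_j|$, and then to observe that in the far field region the denominators decay like $|y|^{2+2\alpha}$ with $|y|\geq \pi$, producing a convergent integral exactly because $\alpha>0$.

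Concretely, I would regroup the four terms of $K_1(x,y)$ as
\[ K_1(x,y) = (x_2-y_2)\left[\frac{1}{|x-y|^{2+2\alpha}} - \frac{1}{|\tilde x - y|^{2+2\alpha}}\right] + (x_2+y_2)\left[\frac{1}{|x+y|^{2+2\alpha}} - \frac{1}{|\bar x - y|^{2+2\alpha}}\right], \]
so that within each pair the two denominators differ only in the sign of $x_1$. The identities $|x-y|^2-|\tilde x-y|^2=-4x_1y_1$ and $|x+y|^2-|\bar x-y|^2=4x_1y_1$, combined with the mean value theorem applied to $s\mapsto s^{-1-\alpha}$, will then produce the factor $|x_1y_1|$ in each bracket. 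Since $|x|\leq 1$ and $|y|\geq \pi$ throughout the region of integration, all four reflected distances $|x\pm y|$, $|\tilde x-y|$, $|\bar x-y|$ are comparable to $|y|$ up to universal constants.

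Putting the pieces together with the crude bounds $|x_2\pm y_2|\lesssim |y|$ and $|y_1|\leq |y|$ yields the pointwise estimate $|K_1(x,y)|\leq C(\alpha)|x_1||y|^{-2-2\alpha}$ on the far field, and an identical argument (or symmetry) gives $|K_2(x,y)|\leq C(\alpha)|x_2||y|^{-2-2\alpha}$. Multiplying by $\|\omega\|_{L^\infty}$ and passing to polar coordinates reduces the required bound to the one-dimensional integral $\int_\pi^\infty r^{-1-2\alpha}\,dr$, which converges precisely because $\alpha>0$. The only delicate point is getting the pairing of the four terms right so that the mean value theorem extracts the $|x_j|$ factor without losing a power of $|y|$ in the denominator; no near-diagonal singularity arises, since $|x|\leq 1<\pi\leq |y|$ already forces $|x-y|\geq \pi-1>0$, and the only genuine question is decay at infinity. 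I therefore do not anticipate any significant obstacle.
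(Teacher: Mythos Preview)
Your argument is correct. The overall structure---extract a factor of $x_j$ from $K_j$ via the mean value theorem, then integrate the remaining $|y|^{-2-2\alpha}$ decay in polar coordinates to get $\int_\pi^\infty r^{-1-2\alpha}\,dr<\infty$---matches the paper's proof.

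The one difference worth noting is how the kernel bound is obtained. The paper does not rederive anything here: it simply invokes the sharper asymptotic $K_j(x,y)=(-1)^{j-1}8(1+\alpha)x_j y_1y_2|y|^{-4-2\alpha}(1+O(|x|/|y|))$ already established in the proof of Proposition~\ref{keymed}, observing that it continues to hold for $|x|\leq 1$ and $|y|\geq \pi$. Your approach instead works from scratch with the pairing $\{|x-y|,|\tilde x-y|\}$ and $\{|x+y|,|\bar x-y|\}$, which extracts only the $x_1y_1$ factor and yields the cruder bound $|K_1(x,y)|\lesssim |x_1|\,|y|^{-2-2\alpha}$. This is enough for the lemma and has the virtue of being self-contained, whereas the paper's version piggybacks on a computation done for another purpose. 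Either way the radial integral is the same $\int_\pi^\infty r^{-1-2\alpha}\,dr$, so nothing is lost.
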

\begin{proof}
Note that the estimates \eqref{k1est}, \eqref{k2est} on the Biot-Savart kernels continue to apply when $|x| \leq 1,$ and
$y \in [0,\infty)^2 \setminus [0,\pi)^2.$ Then we get that
\begin{eqnarray*}
|u_j^{far}(x)| \leq  Cx_j \int_{[0,\infty)^2 \setminus [0,\pi)^2} y_1y_2 |y|^{-4-2\alpha} |\omega(y)|\,dy  \leq \\
Cx_j \|\omega\|_{L^\infty} \int_1^\infty r^{-1-2\alpha}\,dr = C(\alpha) x_j \|\omega\|_{L^\infty}.
\end{eqnarray*}
\end{proof}

The final estimate we need is a lower bound on the absolute value of the velocity components $(-1)^j u_j,$ $j=1,2,$ near the origin provided certain assumptions on the structure
of vorticity.
\begin{lemma}\label{background}
There exists a constant $1>\delta_0>0$ such that if $\delta \leq \delta_0,$ the following is true.
Let $\delta$ be a small number, $\delta \leq 1.$
Suppose, in addition to symmetry assumptions made above, that we have $1 \geq \omega_0(x) \geq 0$ on $[0,\pi)^2$ and that
$\omega_0(x)=1$ if $\delta \leq x_{1,2} \leq  \pi-\delta$. Then for all $x$ and $L \geq L_0$ such that $L |x| \leq \delta,$ we have that
\begin{equation}\label{backgroundeq}
(-1)^j u_j^{med}(x,t) \geq cx_j \delta^{-\alpha}.
\end{equation}
\end{lemma}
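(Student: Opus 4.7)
The plan is to combine the kernel expansion from Proposition~\ref{keymed} with the area--preserving nature of the flow on $[0,\pi)^2$. Writing $-K_1(x,y)=8(1+\alpha)x_1 y_1 y_2 |y|^{-4-2\alpha}(1+f_1)$ with $|f_1|\le AL^{-1}\le 1/4$ on $R^{*}:=\{y\in[0,\pi)^2:y_1,y_2\ge L|x|\}$, and using that $\omega(\cdot,t)\ge 0$ on $[0,\pi)^2$ is preserved by transport, I would first reduce the problem to
\[
-u_1^{med}(x,t)\;\ge\;6(1+\alpha)\,x_1\int_{R^{*}} y_1 y_2 |y|^{-4-2\alpha}\omega(y,t)\,dy,
\]
modulo a separate check that the two narrow corner strips of $R=[0,\pi)^2\setminus[0,L|x|]^2$ (where one coordinate is below $L|x|$) contribute a piece that is either of the right sign or subleading---a direct bound on $K_1$ there suffices.

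The only dynamical input used is area preservation plus odd symmetry: since $\omega(y,t)=\omega_0(\Phi_{-t}(y))$ with $\Phi_t$ an area--preserving map of $[0,\pi)^2$ to itself, the ``bad'' set $B_t:=\{y\in[0,\pi)^2:\omega(y,t)<1\}=\Phi_t(B_0)$ has measure $|B_t|=|B_0|\le \pi^2-(\pi-2\delta)^2\le 4\pi\delta$, uniformly in $t$. I would then pick the test scale $r_0:=C\sqrt{\delta}$ for a large universal constant $C$, noting that $L|x|\le\delta\le r_0/C$ for $\delta\le 1$ and $r_0\le\pi/2$ once $\delta\le\delta_0:=\min(1,\pi^2/(16C^2))$, and restrict attention to the annular sector
\[
A:=\{y:r_0\le|y|\le 2r_0,\ y_1\ge L|x|,\ y_2\ge L|x|\}\;\subset\; R^{*}.
\]
A short polar--coordinate calculation gives $\int_A y_1 y_2 |y|^{-4-2\alpha}\,dy\gtrsim_\alpha r_0^{-2\alpha}=C^{-2\alpha}\delta^{-\alpha}$, while $\sup_A y_1 y_2 |y|^{-4-2\alpha}\le r_0^{-2-2\alpha}$, so the worst--case contribution of $B_t\cap A$ is at most $|B_t|\cdot r_0^{-2-2\alpha}\le 4\pi C^{-2-2\alpha}\delta^{-\alpha}$. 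Choosing $C^2$ large keeps the bulk dominant; since $\omega\equiv 1$ on $A\setminus B_t$, this yields $\int_A y_1 y_2 |y|^{-4-2\alpha}\omega(\cdot,t)\,dy\gtrsim_\alpha \delta^{-\alpha}$ and hence $-u_1^{med}(x,t)\ge c\, x_1 \delta^{-\alpha}$. The $j=2$ case is identical via the symmetric expansion \eqref{k2est}.

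The main obstacle, in my view, is identifying the correct test scale $r_0\sim\sqrt{\delta}$. Testing at $|y|\sim L|x|$ makes the Biot--Savart kernel so peaked that the crude area bound $|B_t|\le 4\pi\delta$ gives no usable estimate (a worst--case placement of $B_t$ near the inner boundary would swamp the bulk), while testing at $|y|\sim\delta$ leaves $B_t$ enough area to cover essentially the whole test shell. The scale $\sqrt{\delta}$ is exactly the crossover at which the shell area $\sim C^2\delta$ dwarfs $|B_t|$ yet the bulk integral still realizes the sharp $\delta^{-\alpha}$ power asserted by the lemma.
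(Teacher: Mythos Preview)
Your argument is correct and follows the same path as the paper: both combine the pointwise kernel lower bound coming from the expansion in Proposition~\ref{keymed} with the measure bound $|B_t|\le 4\pi\delta$ from incompressibility, and both pivot on the scale $\sqrt{\delta}$. The only cosmetic difference is that the paper cuts out the inner sector $\{|y|\le M\sqrt{\delta}\}$ (the worst possible location for $B_t$) and integrates the kernel from $M\sqrt{\delta}$ to~$1$, whereas you keep a single dyadic shell at radius $C\sqrt{\delta}$ and bound the $B_t$ contribution by $(\sup_A\text{kernel})\cdot|B_t|$; these are interchangeable implementations of the same bathtub-type estimate.

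One remark on your ``corner strips'' caveat: it is not actually needed. The expansion \eqref{k1est}--\eqref{k2est} only uses $|y|\ge L|x|$, not $y_1,y_2\ge L|x|$ separately, and on $R=[0,\pi)^2\setminus[0,L|x|]^2$ one has $\max(y_1,y_2)\ge L|x|$, hence $|y|\ge L|x|$. Thus $(-1)^jK_j(x,y)\ge 0$ on all of $R$ (with $\omega\ge 0$ there), and the strips may simply be discarded for the lower bound without any further check.
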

\begin{proof}
First, let us describe a soft consequence of the incompressibility and symmetries for properties of the solution $\omega(x,t)$
(similar to the arguments in \cite{KS}).
Note that while the solution stays smooth,
we have
\begin{equation}\label{omsol}
\omega(x,t) = \omega_0(\Phi_t^{-1}(x))
\end{equation}
 where $\Phi_t(x)$ is a smooth, invertible, measure preserving flow map defined by
\begin{equation}\label{trajmap}
 \frac{d \Phi_t(x)}{dt} = u(\Phi_t(x),t), \,\,\,\Phi_0(x)=x.
\end{equation}
In addition, it is not hard to check that odd symmetry of $\omega$ with respect to both $x_1$ and $x_2$ and periodicity imply
that $u_1$ is odd with respect to $x_1=0$ and $x_1=\pm\pi,$ and $u_2$ is odd with respect to $x_2=0$ and $x_2=\pm\pi.$
For this reason, the region $[0,\pi)^2$ is invariant under the flow map. The formula \eqref{omsol} and the assumptions on $\omega_0$ then yield that
the measure of the set in $[0,\pi)^2$ where $\omega(x,t)$ is not equal to one does not exceed $4\pi \delta$ for all $t.$

Next, observe that a consequence of the bound \eqref{k1est} is that if $L \geq L_0$ and $L|x| \leq 1,$ then for all $y_1,y_2 \geq L|x|$
we have
\begin{equation}\label{k1above}
(-1)^jK_j(x,y) \geq Cx_j y_1y_2 |y|^{-4-2\alpha}
\end{equation}
for some constant $C>0.$
Then
\begin{eqnarray*}
(-1)^j u_j^{med}(x,t) \geq Cx_j \int_{[0,\pi)^2 \setminus [0,L|x|]^2} \frac{y_1y_2}{|y|^{4+2\alpha}} \omega(y,t)\,dy \geq \\
Cx_j \int_{M\sqrt{\delta}}^1 \frac{1}{r^{1+2\alpha}}\,dr \geq cx_j \delta^{-\alpha}
\end{eqnarray*}
with some universal $c>0.$
The value of the constant $C$ here changes from expression to expression.
In the second step we used that the measure of the set where $\omega(x,t) <1$ in $[0,\pi)^2$ does not exceed $C\delta.$
We get a lower bound if we cut out of the region of integration a sector of radius $M \sqrt{\delta}$ where the value of the kernel
is largest; $M$ needs to be chosen sufficiently large but is a universal constant.
 \end{proof}

\section{Construction}\label{main}

The next lemma is parallel to the one shown in \cite{Zla}.
In the construction, we will consider the initial data that have an additional degeneracy condition on the derivatives in $x_1$ on vertical
axis. This lemma establishes that this property is preserved for the solution while it stays smooth.

\begin{lemma}\label{degen}
Suppose that in addition to being odd in $x_1$ and $x_2$ and periodic, the initial data $\omega_0$ also satisfies
$\partial_{x_1}^{2j-1}\omega_0(0,x_2)=0$ for all $x_2,$ $j=1,\dots,n.$ Then the solution $\omega(x,t),$
while it remains smooth, also satisfies $\partial_{x_1}^{2j-1}\omega(0,x_2,t)=0.$
\end{lemma}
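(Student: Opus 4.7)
My plan is to exploit the Lagrangian representation \eqref{omsol}, $\omega(x, t) = \omega_0(\Phi_t^{-1}(x))$, together with the Faà di Bruno chain rule, in order to convert the degeneracy hypothesis on $\omega_0$ into the same degeneracy for $\omega(\cdot, t)$.

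First I would verify that the axis $\{x_1 = 0\}$ is invariant under the flow. When $x_1 = 0$ one has $\tilde{x} = x$ and $\bar{x} = -x$, so the four terms in the kernel $K_1(x,y)$ from \eqref{bsu1} cancel pairwise and $u_1(0, x_2, t) \equiv 0$; this fact is also implicit in the proof of Lemma~\ref{background}. Since the flow \eqref{trajmap} is smooth as long as $\omega$ is smooth, it follows that
$$\Phi_t^{-1}(0, x_2) = (0, \phi_t(x_2))$$
for some smooth function $\phi_t$.

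Second, I would apply the Faà di Bruno formula to $\partial_{x_1}^k[\omega_0(\Phi_t^{-1}(x))]$, regarding $\omega_0$ as a scalar function of two variables and $\Phi_t^{-1}$ as a smooth $\Rm^2$-valued map. Every term in the resulting expansion has the form
$$c_\beta(x, t)\, (\partial^\beta \omega_0)(\Phi_t^{-1}(x)),$$
where $\beta = (\beta_1, \beta_2)$ is a multi-index with $1 \leq |\beta| \leq k$, and $c_\beta(x,t)$ is a polynomial in the $x_1$-derivatives of the components of $\Phi_t^{-1}$ of orders at most $k$.

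Third, I would upgrade the hypothesis. The assumption $\partial_{x_1}^{2j-1}\omega_0(0, x_2) = 0$ for $j = 1, \ldots, n$, combined with the oddness of $\omega_0$ in $x_1$ (which forces $\partial_{x_1}^{2j}\omega_0(0, x_2) \equiv 0$ for $j = 0, 1, \ldots, n-1$), gives
$$\partial_{x_1}^m \omega_0(0, x_2) \equiv 0 \quad\text{for all } 0 \leq m \leq 2n - 1.$$
Since these are identities in $x_2$, differentiating in $x_2$ preserves them, so $(\partial^\beta \omega_0)(0, x_2) = 0$ for every multi-index $\beta$ with $\beta_1 \leq 2n - 1$. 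Evaluating the Faà di Bruno expansion at $x = (0, x_2)$, where $\Phi_t^{-1}(x) = (0, \phi_t(x_2))$, one sees that for any $k \leq 2n - 1$ every term carries a factor $(\partial^\beta \omega_0)(0, \phi_t(x_2))$ with $\beta_1 \leq |\beta| \leq k \leq 2n - 1$, and so vanishes. Hence $\partial_{x_1}^k \omega(0, x_2, t) = 0$ for all $0 \leq k \leq 2n - 1$, which in particular includes the odd orders $k = 2j - 1$, $j = 1, \ldots, n$, that the lemma asserts.

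The only substantive point is the Faà di Bruno bookkeeping: one needs the simple but crucial observation that the pure $x_1$-order $\beta_1$ of any outer derivative of $\omega_0$ appearing in $\partial_{x_1}^k \omega$ is at most $k$. A pedestrian alternative is to differentiate the transport equation $2j-1$ times in $x_1$, evaluate on $\{x_1 = 0\}$, and set up an induction in $j$; but that route then requires separately showing that the relevant $x_1$-derivatives of $u_1$ and $u_2$ on the axis inherit the degeneracy from $\omega$ via the Biot-Savart law, which the Lagrangian argument bypasses entirely.
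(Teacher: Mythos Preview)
Your argument is correct, and it proceeds by a genuinely different route from the paper. The paper differentiates the transport equation once in $x_1$, restricts to $\{x_1=0\}$, and uses only the parity of the velocity ($u_1$ odd in $x_1$, $u_2$ even in $x_1$) to obtain a closed linear transport equation for $v(x_2,t)=\partial_{x_1}\omega(0,x_2,t)$ with zero initial data; it writes out only the case $n=1$ and remarks that the general case follows by induction. Your Lagrangian approach instead pushes the derivatives onto $\omega_0$ via Fa\`a di Bruno and handles all orders $k\le 2n-1$ simultaneously with no induction, which is cleaner for the general statement. One small correction to your closing remark: the ``pedestrian'' PDE route actually does \emph{not} need to show that derivatives of $u$ inherit extra degeneracy from $\omega$ through the Biot--Savart law---plain parity of $u_1$ and $u_2$ in $x_1$, together with the inductive hypothesis on lower odd $x_1$-derivatives of $\omega$, is already enough to close the equation for $\partial_{x_1}^{2j-1}\omega(0,x_2,t)$ at each step.
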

\it Remark. \rm That all even derivatives of $\omega_0$ in $x_1$ also vanish on $x_2$ axis follows from odd symmetry.
\begin{proof}
Let us show the result for $n=1,$ the only case we use in the construction. It can be extended to arbitrary $n$ by inductive
argument. Let us differentiate the equation for $\omega$ with respect to $x_1:$
\[ \partial_t \partial_{x_1} \omega + \partial_{x_1} u_1 \partial_{x_1}\omega + u_1 \partial^2_{x_1} \omega + \partial_{x_1}u_2 \partial_{x_2}\omega +
u_2 \partial^2_{x_1x_2} \omega =0. \]
Note that $u_1$ is odd in $x_1$ and $u_2$ is even in $x_1.$ Then the third and fourth terms in the above equation vanish if $x_1=0.$
Denoting $v(x_2,t) = \partial_{x_1} \omega(0,x_2,t),$ we get that $v$ satisfies a self contained equation on the line $(0,x_2):$
\[ \partial_t v + u_2 \partial_{x_2} v + \partial_{x_1}u_1 v =0, \]
and $v(x_2,0)=0$ by assumption. Then $v(x_2,t)$ must stay zero while $\omega$ stays smooth.
\end{proof}

We are now ready to prove Theorem~\ref{mainthm}.
\begin{proof}[Proof of Theorem~\ref{mainthm}]
Let us choose the initial data $\omega_0$ as follows. First, as we already discussed, $\omega_0$ is odd with respect to both
$x_1$ and $x_2,$ $1 \geq \omega_0(x) \geq 0$  in  $[0,\pi)^2$ and it equals $1$ in this region, apart from a strip of width $\leq \delta$ along the boundary.
The parameter $\delta \leq \delta_0<1$ will be fixed later.
We also require $\partial_{x_1}\omega_0(0,x_2)=0$ for all $x_2,$ a condition that is preserved for all times while the solution
stays smooth by Lemma~\ref{degen}. Finally, we assume that in a small neighborhood of the origin of order $\sim \delta$
we have $\omega_0(x_1,x_2) = \delta^{-4} x_1^3 x_2.$ Note that $\partial_{x_1x_1}^2\omega_0(0,x_2)=0$ by oddness, so this is the
``maximal" behavior of $\omega_0$ under our degeneracy condition.

Fix arbitrary $T \geq 1;$ for small $T$ the result follows automatically as $\|\nabla^2 \omega(\cdot, t)\|_{L^\infty} \geq c\delta^{-2}$
for all times. Take $x_1^0 = e^{-T \delta^{-\alpha/2}}$ and $x_2^0 =(x_1^0)^\beta$ where $\beta$ is a parameter. 
In general, we will have three parameters in our construction: $\delta,$ $L$ and $\beta.$ 
The parameters $\beta$ and $L$ are chosen to satisfy
\begin{equation}\label{betaL}
\beta =5, \,\,\,L \geq L_0 \geq 2, \,\,\,2\beta(2+B)L^{-1} \leq 1, 
\end{equation}
where $L_0$ and $B$ are universal constants from Proposition~\ref{keymed}.
Throughout the construction, we will place constraints on $\delta$  that will be consistent; we will recap these requirements at the end of the argument.
Note that
\[ \omega_0(x_1^0,x_2^0) = \delta^{-4} (x_1^0)^{3+\beta} = \delta^{-4} e^{-(3+\beta)T\delta^{-\alpha/2}}. \]
Consider the trajectory $(x_1(t),x_2(t))$ originating at $(x_1^0,x_2^0).$ We will be tracking this trajectory until
either time reaches $T,$ or $x_2(t)$ reaches $x_1^0,$ or $\|\nabla^2 \omega(\cdot, t)\|_{L^\infty}$ becomes large enough to
satisfy the lower bound we seek.

Let us denote
\[ T_0 = {\rm min} \left(T, \, {\rm min} \{ t: \,x_2(t) = x_1^0\}, \, {\rm min} \{ t: \,\|\nabla^2\omega(\cdot,t)\|_{L^\infty} \geq \exp(cT) \} \right). \]
Observe that for all $t \leq T_0,$ we have $x_2(t) \leq x_1^0.$
Suppose that for some $0 \leq t_0 \leq T_0,$ we have for the first time
\begin{equation}\label{est21} |u_j^{near}(x(t_0),t_0)| + |u_j^{far}(x(t_0),t_0)| \geq L^{-1} (-1)^j u_j^{med}(x(t_0),t_0) \end{equation}
for either $j=1$ or $j=2,$ where $L \geq L_0$ is to be fixed later (we include inequality as an option in \eqref{est21} since we
could have $t_0=0$).
Note that we must have $x_1(t_0) \leq x_1^0,$ since due to \eqref{backgroundeq} we have $u_1(x(t),t) \leq 0$ for $t < t_0.$
Because of the estimates \eqref{unear}, \eqref{ufar} and \eqref{backgroundeq}, the inequality \eqref{est21} implies
that
\begin{equation}\label{backflow} C|x(t_0)|^{2-2\alpha}L^{2-2\alpha}\|\nabla^2 \omega(\cdot, t_0)\|_{L^\infty}+C\|\omega\|_{L^\infty} \geq
cL^{-1}\delta^{-\alpha}. \end{equation}
Application of \eqref{backgroundeq} requires $L|x(t_0)| \leq \delta,$ which holds if
\begin{equation}\label{par1con}
2e^{-\delta^{-\alpha/2}} L \leq \delta.
\end{equation}
Now suppose also that $\delta$ is such that
\begin{equation}\label{par2con}
 c\delta^{-\alpha/2} \geq 2CL^{3-2\alpha}
\end{equation}
and 
\begin{equation}\label{par3con127}
c\delta^{-\alpha} \geq 2C\|\omega\|_{L^\infty} L.
\end{equation}
Then \eqref{backflow} implies that
\begin{eqnarray*} \|\nabla^2\omega(\cdot, t_0)\|_{L^\infty} \geq \delta^{-\alpha/2} |x(t_0)|^{-2+2\alpha} \geq \\
\delta^{-\alpha/2}(x_1^0)^{-2+2\alpha} =  \delta^{-\alpha/2} e^{(2-2\alpha)\delta^{-\alpha/2}T}. \end{eqnarray*}
Thus the bound we seek is satisfied at $t_0$ and we are done. Therefore, from now on we can assume that for all $t \leq T_0,$
we have
\begin{equation}\label{est22}
 |u_j^{near}(x(t),t)| + |u_j^{far}(x(t),t)| \leq L^{-1} (-1)^ju_j^{med}(x(t),t)
\end{equation}
for $j=1,2.$

Next, suppose that $T_0=T.$ Then due to \eqref{est22} and \eqref{backgroundeq} we have
\[ u_1(x(t),t) \leq -(1-L^{-1})c x_1 \delta^{-\alpha} \]
for all $t \leq T.$ Also $x_1(t) \leq x_0^1$ for all $t \leq T.$ Therefore,
\[ x_1(T) \leq x_1^0 e^{-\frac{c}{2}\delta^{-\alpha}T}. \]
On the other hand, \[ \omega(x_1(T),x_2(T),T) = \omega_0(x_1^0,x_2^0) =\delta^{-4} e^{-(3+\beta)T\delta^{-\alpha/2}}. \]
Since \[ \omega(0,x_2(T),T)=\partial_{x_1}\omega(0,x_2(T),T)=0, \]
we obtain that
\[ \|\partial_{x_1x_1}^2\omega(\cdot,T)\|_{L^\infty} \geq 2\omega(x_1(T),x_2(T),T)x_1(T)^{-2} \geq \delta^{-4}e^{(c\delta^{-\alpha/2}-(3+\beta))\delta^{-\alpha/2}T}. \]
Taking $\delta$ so that
\begin{equation}\label{par3con}
c\delta^{-\alpha/2} \geq 2(3+\beta)
\end{equation}
makes sure that the lower bound we seek holds in this case, too.

It remains to consider the case where $T_0 < T$ and \eqref{est22} holds for all $t \leq T_0.$
Then $x_2(T_0)=x_1^0.$ By \eqref{est22}, for all $0 \leq t \leq T_0$ we have
\begin{equation}\label{est31} -\frac{u_1(x(t),t)}{x_1(t)} \geq -(1-L^{-1}) \frac{u_1^{med}(x(t),t)}{x_1(t)}. \end{equation}
By the estimate \eqref{umed} of Proposition~\ref{keymed}, we also have
\begin{equation}\label{est32}  -\frac{u_1^{med}(x(t),t)}{x_1(t)} \geq (1-BL^{-1}) \frac{u_2^{med}(x(t),t)}{x_2(t)}. \end{equation}
Finally, by \eqref{est22} again,
\begin{equation}\label{est33}
\frac{u_2^{med}(x(t),t)}{x_2(t)} \geq (1-L^{-1}) \frac{u_2(x(t),t)}{x_2(t)}.
\end{equation}
Combining \eqref{est31}, \eqref{est32} and \eqref{est33}, we get that
\begin{equation}\label{est34}
-\frac{u_1(x(t),t)}{x_1(t)} \geq (1-(2+B)L^{-1}) \frac{u_2(x(t),t)}{x_2(t)}
\end{equation}
due to our choice \eqref{betaL} of $L$. 
Therefore
\begin{eqnarray*}
\frac{x_1^0}{x_1(T_0)} = e^{-\int_0^{T_0} \frac{u_1(x(t),t)}{x_1(t)}\,dt} \geq
e^{(1-(2+B)L^{-1})\int_0^{T_0} \frac{u_2(x(t),t)}{x_2(t)}\,dt} = \\ \left( \frac{x_2(T_0)}{x_2^0} \right)^{1-(2+B)L^{-1}} =
(x_1^0)^{(1-\beta)(1-(2+B)L^{-1})}.
\end{eqnarray*}
Here we used that $x_2^0=(x_1^0)^\beta.$
It follows that
\[ x_1(T_0) \leq (x_1^0)^{\beta(1-(2+B)L^{-1})+(2+B)L^{-1}}. \]
Similarly to the previous case, this implies that
\begin{eqnarray*} \|\partial^2_{x_1x_1} \omega(\cdot, T_0)\|_{L^\infty} \geq 2\omega(x_1(T_0),x_2(T_0),T_0) x_1(T_0)^{-2} \geq
\delta^{-4} (x_1^0)^{3+\beta-2\beta(1-(2+B)L^{-1})} = \\
\delta^{-4} (x_1^0)^{3-\beta+2\beta (2+B)L^{-1}}= \delta^{-4} e^{\delta^{-\alpha/2}(\beta-3-2\beta (2+B)L^{-1})T}
\geq \delta^{-4} e^{\delta^{-\alpha/2}T}, \end{eqnarray*}
where in the last step we used \eqref{betaL}. 
Finally, it remains to fix  $\delta \leq \delta_0$ (where $\delta_0$ is a universal constant
from Lemma~\ref{background}) so that the conditions  \eqref{par1con}, \eqref{par2con}, \eqref{par3con127} and \eqref{par3con}
are satisfied.
\end{proof}

\noindent {\bf Acknowledgement.} \rm AK acknowledges partial support of the NSF-DMS grant 1848790, and thanks
Tarek Elgindi for stimulating discussion. The authors also thank anonymous referees for careful reading of the 
manuscript and valuable suggestions to improve the paper.

\end{document}